\documentclass{amsart}

\usepackage[english]{babel}

\usepackage[letterpaper,top=2cm,bottom=2cm,left=3cm,right=3cm,marginparwidth=1.75cm]{geometry}

\usepackage{mathrsfs}  
\usepackage{amsmath}
\usepackage{graphicx}
\usepackage[colorlinks=true, allcolors=blue]{hyperref}
\usepackage{amsthm}



\newtheorem{thm}[equation]{Theorem}
\newtheorem{cor}[equation]{Corollary}
\newtheorem{lemma}[equation]{Lemma}

\theoremstyle{definition}
\newtheorem{remark}[equation]{Remark}

\newtheorem{question}[equation]{Question}

\newcommand{\nc}[2]{\newcommand{#1}{#2}}
\nc{\on}{\operatorname}

\title{The Manin-Mumford conjecture in genus 2 and rational curves on K3 surfaces}
\author{Philip Engel, Raju Krishnamoorthy, Daniel Litt}

\begin{document}
\maketitle

\begin{abstract}
Let $A$ be a simple abelian surface over an algebraically closed field $k$. Let $S\subset A(k)$ be the set of torsion points $x$ of $A$ such that there exists a genus $2$ curve $C$ and a map $f: C\to A$ such that $x$ is in the image of $f$, and $f$ sends a Weierstrass point of $C$ to the origin of $A$. The purpose of this note is to show that if $k$ has characteristic zero, then $S$ is finite --- this is in contrast to the situation where $k$ is the algebraic closure of a finite field, where $S=A(k)$, as shown by Bogomolov and Tschinkel. We deduce that if $k=\overline{\mathbb{Q}}$, the Kummer surface associated to $A$ has infinitely many $k$-points not contained in a rational curve arising from a genus $2$ curve in $A$, again in contrast to the situation over the algebraic closure of a finite field.
\end{abstract}

\section{Introduction}

Bogomolov and Tschinkel have observed that for a Kummer K3 surface $X$ over an algebraic closure of a finite field $\overline{\mathbb{F}_q}$, there exists a  rational curve passing through every $\overline{\mathbb{F}_q}$-point of $X$ \cite[Theorem 1.1]{BT:finite-fields}. Equivalently, for an abelian surface $A/\overline{\mathbb{F}_q}$, any point $x\in A(\overline{\mathbb{F}_q})$ lies in the image of a morphism $f:C\to A$ from a
hyperelliptic curve $C$ that sends a Weierstrass point $O_C$ to the origin $O_A$.
They point out that as yet we have not ruled out a similar phenomenon over $\overline{\mathbb{Q}}$ \cite{BT:elliptic, BT:finite-fields}. 

The goal of this note is to rule out a slightly weaker phenomenon. Bogomolov and Tschinkel in fact show that every point $x\in A(\overline{\mathbb{F}_q})$ in an abelian surface lies in the image of a map $f: (C,O_C)\to (A,O_A)$ from a \emph{genus $2$ curve $C$}, with $O_C$ a marked Weierstrass point.  In contrast, we show:
\begin{thm}\label{thm:mainthm}
Let $A$ be a simple abelian surface over an algebraically closed field of characteristic zero. The set of torsion points $x\in A$, such that there exists a map $f\colon (C,O_C)\to (A,O_A)$ from a genus $2$ hyperelliptic curve with a marked Weierstrass point,  containing $x$ in its image, is finite. \end{thm}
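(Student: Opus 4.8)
The plan is to push everything to the Jacobian, reduce to a question about torsion on theta divisors of genus $2$ curves isogenous to $A$, and then combine the rigidity of hyperelliptic theta divisors with a uniform form of Manin--Mumford. First I would reduce to $k=\overline{\mathbb{Q}}$ by spreading out: any witnessing datum $(C,O_C,f,p)$ is defined over a finitely generated field, and finiteness of $S$ can be tested after specialization. Given $x\in S$, write $x=f(p)$. Since $f(O_C)=O_A$, the map $f$ factors through the Albanese, $f=\phi\circ\iota_C$, with $\iota_C\colon C\hookrightarrow J_C:=\operatorname{Jac}(C)$ the Abel--Jacobi embedding based at $O_C$ and $\phi\colon J_C\to A$ a homomorphism. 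If $f$ is non-constant, simplicity of $A$ forces $\phi$ to be an isogeny, so $\iota_C(p)$ is a \emph{torsion} point of $J_C$ lying on the symmetric theta divisor $\Theta_C:=\iota_C(C)$. Thus $S\setminus\{O_A\}$ consists of points $\phi(\tau)$ with $\tau\in\Theta_C\cap(J_C)_{\mathrm{tors}}$ and $\phi$ an isogeny onto the fixed surface $A$.

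The next step is to pin down $\Theta_C\cap(J_C)_{\mathrm{tors}}$ using that the hyperelliptic involution $\iota$ of $C$ acts as $[-1]$ on $J_C$ and fixes every Weierstrass point. If $\tau=\iota_C(q)$ has order $m$ and $h$ is a function with $\operatorname{div}(h)=m([q]-[O_C])$, then $h\cdot\iota^*h=c\,g^m$, where $g$ cuts out the hyperelliptic pencil through $q$; writing $C\colon y^2=f(x)$ with $\deg f=5$ and $O_C$ at infinity, and $h=P(x)+Q(x)y$, this becomes the polynomial identity $P^2-Q^2f=c\,(x-x(q))^m$. A degree/parity comparison shows $Q=0$ unless $m\ge 5$, and $Q=0$ forces $q$ to be a Weierstrass point and $m\le 2$. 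Hence the torsion of $\Theta_C$ consists of the six $2$-torsion points $[W_i-O_C]$, whose $\phi$-images lie in the finite set $A[2]$, together with points of order $\ge 5$; the former contribute harmlessly to $S$, and everything reduces to controlling $\phi(\tau)$ for $\operatorname{ord}(\tau)\ge 5$. One cannot simply excise these: the norm identity also \emph{produces} them — for instance $y^2=d^{-2}\bigl(P(x)^2-c\,(x-x_0)^5\bigr)$ carries a genuine $5$-torsion point on its theta divisor — so a real arithmetic input is unavoidable.

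Now I would invoke the uniform Manin--Mumford theorem in genus $2$: there is an absolute constant $c_0$ with $\#\bigl(\Theta_C\cap(J_C)_{\mathrm{tors}}\bigr)\le c_0$ for every genus $2$ curve $C$, equivalently every curve $\Gamma:=f(C)=\phi(\Theta_C)\subset A$ contains at most $c_0$ torsion points (over $\overline{\mathbb{Q}}$ this is available through the work of Dimitrov--Gao--Habegger and K\"uhne; in the spirit of the title one may also seek a self-contained proof via the norm identity above together with an essential-minimum estimate on the Kummer surface $\operatorname{Kum}(A)$). To convert a per-curve bound into finiteness of $S$, use that $S$ is intrinsically defined, hence stable under $\operatorname{Gal}(\overline{\mathbb{Q}}/K_0)$ for a field of definition $K_0$ of $A$; if $x\in S$ has order $n$ and lies on $\Gamma$, then $\Gamma$ contains all $\operatorname{Gal}(\overline{\mathbb{Q}}/K_0(\Gamma))$-conjugates of $x$, so $[K_0(\Gamma)(x):K_0(\Gamma)]\le c_0$ and therefore $[K_0(x):K_0]\le c_0\,[K_0(\Gamma):K_0]$. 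Since $J_C=A/G$ for a finite subgroup scheme $G$ of exponent $e$, the curve $\Gamma$ is defined over $K_0(A[e])$, of degree polynomial in $e$ over $K_0$, and $e$ is governed by the numerical class of $\Gamma$, which must be large once $\Gamma$ supports a torsion point of large order; playing this against Masser's lower bound $[K_0(x):K_0]\gg_A n^{\delta}$ bounds $n$, so $S$ is finite.

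The main obstacle is precisely this last point. A priori the Jacobians isogenous to $A$ — hence the curves $C$, hence the curves $\Gamma=f(C)\subset A$ — form an infinite (though countable) family, each contributing its own finite packet of torsion points, and the delicate part is that these packets must not accumulate. Making Step three uniform not merely over a single curve but over the entire isogeny class of $A$ — effectively a uniform or relative height inequality rather than Raynaud's theorem applied one curve at a time — is where the real work concentrates; the hyperelliptic norm identity, which confines the torsion of $\Theta_C$ to orders $\ge 5$ and relates $h$ explicitly to the defining equation of $C$, is what makes such uniformity plausible in genus $2$.
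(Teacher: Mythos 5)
Your opening reduction --- factor $f$ through $\operatorname{Jac}(C)$, use simplicity of $A$ to see the induced map $\phi\colon \operatorname{Jac}(C)\to A$ is an isogeny, and conclude that torsion points of $A$ in the image of $f$ come from torsion points of $\operatorname{Jac}(C)$ on the Abel--Jacobi image $\Theta_C$ --- is exactly the paper's first step. But the arithmetic core of your argument has a genuine gap, which you in effect concede in your last paragraph. Uniform Manin--Mumford (Dimitrov--Gao--Habegger, K\"uhne) bounds the \emph{number} of torsion points on each $\Theta_C$ by an absolute constant $c_0$, but $S$ is a union over an infinite family of curves (the degree of $\phi$ is unbounded over the isogeny class of $A$), and a per-curve cardinality bound cannot by itself prevent that union from being infinite. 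Your attempt to close this with Galois orbits runs in the wrong direction: the chain $n^{\delta}\ll_A [K_0(x):K_0]\le c_0\,[K_0(\Gamma):K_0]\le c_0\cdot\mathrm{poly}(e)$ only shows that a torsion point of large order $n$ must lie on a curve whose associated isogeny has large exponent $e$ --- no contradiction, since $e$ is unbounded. The assertion that this ``bounds $n$'' rests on the unproved claim that large torsion order forces a large numerical class \emph{and} that this can be played back against the degree bound; as written nothing bounds $n$. (Secondary issues: the claim that $\Gamma$ is defined over $K_0(A[e])$ is not automatic --- descending the symmetric theta divisor requires descending a principal polarization and splitting a $2$-torsion torsor, costing a further bounded extension, as in Lemma \ref{lem:field-of-defn} --- and your specialization from a finitely generated field to $\overline{\mathbb{Q}}$ needs an argument that simplicity is preserved. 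The hyperelliptic norm-identity digression is correct in spirit but tangential to the real difficulty.)

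The missing idea, and the paper's actual mechanism, is to bound uniformly the \emph{order} of torsion points on each $\Theta_C$, not their number. The Lang--Serre--Ribet proof of Manin--Mumford (Theorem \ref{thm:manin-mumford}) bounds the order of torsion on an Abel--Jacobi embedded genus $2$ curve purely in terms of the index of the homotheties lying in the image of Galois on the Tate module of its Jacobian; by Wintenberger's theorem (Theorem \ref{thm:serre-thm}) this index is finite for $A$. The key uniformity is then almost formal: every $\operatorname{Jac}(C)$ in your family is isogenous to the fixed surface $A$, and after a descent of absolutely bounded degree the isogeny identifies $T(\operatorname{Jac}(C))$ with a finite-index Galois-stable submodule of $T(A)$; since a homothety preserves every submodule, the homothety index for $\operatorname{Jac}(C)$ is controlled by that of $A$, up to the bounded field extensions. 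Hence a single $N=N(A)$ bounds the order of every torsion point arising in $S$, which gives finiteness. If you want to salvage your route instead, you would need either a uniform bound on torsion \emph{orders} across the family or a bound on $[K_0(\Gamma):K_0]$ independent of $e$; your argument supplies neither.
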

In principle, the bound obtained is explicit in terms of the Galois representation on the Tate module of a descent of $A$ over a finitely generated field.

The condition that $f$ send a Weierstrass point to the origin is crucial, as the  translates of one image $f(C)$ will cover $A$. Moreover the result is false if one does not assume simplicity; for example, if $A=E\times E$ is the square of a CM elliptic curve, every torsion point is in the image of a map from a genus $2$ curve sending a Weierstrass point to the origin. Finally, note that as the Torelli map in genus $2$ is dominant, there are in general infinitely many isomorphism classes of genus $2$ curves mapping to a given abelian surface. Our theorem thus shows that a countably infinite collection of curves in $A$ contains only finitely many torsion points.

Now let $A$ be an abelian surface, and $X:=\on{Bl}_{A[2]}A/\{\pm 1\}$ the associated Kummer K3 surface. Given a rational curve $C$ in $X$, we let $C'$ be the preimage in $A$. This curve is necessarily irreducible because $A$ contains no rational curves. Let $h(C)$ be the genus of its normalization. As a corollary to Theorem \ref{thm:mainthm}, we have
\begin{cor}\label{cor:main-cor}
Let $X/\overline{\mathbb{Q}}$ be a Kummer K3 surface associated to a simple abelian surface over $\overline{\mathbb{Q}}$. Then there exist infinitely many $\overline{\mathbb{Q}}$-points of $X$ not contained in any rational curve $C$ with $h(C)=2$.
\end{cor}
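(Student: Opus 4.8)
The plan is to deduce Corollary~\ref{cor:main-cor} from Theorem~\ref{thm:mainthm} by translating statements about rational curves on $X$ into statements about genus $2$ curves mapping to $A$. Write $\sigma\colon \on{Bl}_{A[2]}A\to A$ for the blow-down and $\pi\colon \on{Bl}_{A[2]}A\to X$ for the degree $2$ quotient map. Given a rational curve $C\subset X$ with $h(C)=2$, its preimage $\pi^{-1}(C)$ is $[-1]$-stable, and since $A$ contains no rational curves (so in particular $C$ is not one of the sixteen exceptional curves), $C'=\sigma(\pi^{-1}(C))$ is an irreducible $[-1]$-stable curve in $A$ whose normalization $\widetilde{C'}$ has genus $2$. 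The involution $[-1]$ does not act trivially on $C'$—otherwise $C'\subseteq A[2]$ would be finite—so it lifts to a nontrivial involution $\iota$ of the genus $2$ curve $\widetilde{C'}$, which must therefore be the hyperelliptic involution. If $w$ is any of the six Weierstrass points of $\widetilde{C'}$ and $f\colon \widetilde{C'}\to A$ the natural map, then $\iota(w)=w$ together with $f(\iota(p))=-f(p)$ forces $f(w)\in A[2]$.

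Next I would normalize at the origin. Fix a Weierstrass point $w$ and set $s:=f(w)\in A[2]$; composing $f$ with translation by $-s$ gives a map $(\widetilde{C'},w)\to (A,O_A)$ from a genus $2$ hyperelliptic curve sending the marked Weierstrass point to $O_A$. By Theorem~\ref{thm:mainthm} the set $S\subset A$ of torsion points lying in the image of such a map is finite. It follows that every torsion point of $A$ lying on some $C'$ (for $C\subset X$ rational with $h(C)=2$) lies in the finite set $\bigcup_{s\in A[2]}(S+s)$: indeed, if $a\in C'$ is torsion and $a=f(p)$, then $a-s=(\tau_{-s}\circ f)(p)$ is a torsion point in the image of the translated map, whence $a-s\in S$.

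Finally I would conclude by a counting argument. The group $A(\overline{\mathbb{Q}})_{\mathrm{tors}}$ is infinite, so removing the finite set $\bigcup_{s\in A[2]}(S+s)$ together with $A[2]$ still leaves infinitely many torsion points, and their image in $X$ under the generically $2$-to-$1$ map $A\setminus A[2]\to X$ is still infinite. Each such image point $\bar a$ fails to lie on any rational curve $C$ with $h(C)=2$: if it did, then $a\in A\setminus A[2]$ would lie on $C'$ (the point of $\on{Bl}_{A[2]}A$ over $a$ maps into $\pi^{-1}(C)$, which blows down into $C'$) and hence in $\bigcup_{s\in A[2]}(S+s)$, a contradiction. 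This produces the required infinitude of $\overline{\mathbb{Q}}$-points of $X$.

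The step I expect to be the main obstacle is the geometric bookkeeping of the first paragraph: verifying carefully that $C'$ is an irreducible $[-1]$-stable curve rather than a point or a union of exceptional curves, that the involution induced by $[-1]$ on its genus $2$ normalization is genuinely the hyperelliptic involution (a Riemann--Hurwitz computation, using that the image of $C'$ in $X$ has rational normalization), and that its Weierstrass points map into $A[2]$. Once this correspondence is in hand, the remainder is a direct application of Theorem~\ref{thm:mainthm} together with the elementary fact that translation by a $2$-torsion point commutes with $[-1]$.
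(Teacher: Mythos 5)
Your proposal is correct and takes essentially the same route as the paper, whose proof is the one-line observation that the image in $X$ of a torsion point of sufficiently large order cannot lie on such a curve, by Theorem \ref{thm:mainthm}. Your write-up simply supplies the details the paper leaves implicit: the preimage curve $C'$ is irreducible and $[-1]$-stable, the induced involution on its genus $2$ normalization is hyperelliptic because the quotient is rational (this rationality, not mere nontriviality, is the needed justification, as you note at the end), so Weierstrass points land in $A[2]$, and translating by that $2$-torsion point reduces everything to the finite set $S$ of Theorem \ref{thm:mainthm}.
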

Indeed, the image in $X$ of any point of $A$ of finite order $N\gg 0$ will not be contained in such a curve $C$, by Theorem \ref{thm:mainthm}.
Again, this is in contrast to Bogomolov and Tschinkel's result over the algebraic closure of a finite field.

The primary antecedent to this result is the main result of \cite{BM:smooth-curves}, which shows that for certain elliptic K3 surfaces over $\overline{\mathbb{Q}}$, there exist $\overline{\mathbb{Q}}$-points not contained in any \emph{smooth} rational curves. In contrast, not all of the rational curves in Corollary \ref{cor:main-cor} (that is, those $C$ with $h(C)=2$) are smooth.

We'd like to gratefully acknowledge Ananth Shankar for useful conversations at the outset of this project. 
\section{Proof of Theorem \ref{thm:mainthm}}
Theorem \ref{thm:mainthm} follows from an analysis of a proof of the Manin-Mumford conjecture, essentially due to Lang \cite{lang:division} and Serre \cite[Letter to Ribet, 1/1/1981]{serre-oeuvres} (see e.g.~\cite[Section 2]{BR:Manin-Mumford} for a discussion of the proof). In particular, the main input is a variant of a result of Serre on homotheties in the image of the Galois action on the Tate module of an abelian surface over a number field, due to Wintenberger.

Let $k$ be a field of characteristic zero and $\overline{k}$ an algebraic closure. For an abelian variety $A/k$, let $T_\ell(A)=\varprojlim_n A[\ell^n](\overline{k})$ be the $\ell$-adic Tate module of $A$, $T(A)=\prod_\ell T_\ell(A)$ the total Tate module of $A$, and $$\rho_{A,\ell}: \text{Gal}(\overline{k}/k)\to GL(T_\ell(A)),\hspace{10pt} \rho_A: \text{Gal}(\overline{k}/k)\to GL(T(A))$$ the associated $\ell$-adic (resp.~adelic) Galois representations. For each $\ell$, let $H_\ell\subset GL(T_\ell(A))$ be the subgroup consisting of $\ell$-adic scalar matrices (i.e.~homotheties), and let $H=\prod_\ell H_\ell\subset GL(T(A))$ be the adelic scalar matrices. Wintenberger shows (verifying a conjecture of Lang in dimension $\leq 4$):
\begin{thm}[Wintenberger, {\cite[Corollaire 1]{wintenberger2002demonstration}}]\label{thm:serre-thm}
Let $A$ be an abelian variety over a finitely generated field $k$ of characteristic zero, with $\dim(A)\leq 4$. Then the intersection of $\rho_{A}(\text{Gal}(\overline{k}/k))$ with $H$ is open. 
\end{thm}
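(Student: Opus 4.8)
The plan is to deduce Theorem~\ref{thm:serre-thm} from two inputs of independent interest: an $\ell$-by-$\ell$ openness statement of Bogomolov, and an ``independence of $\ell$'' statement coming from Serre's theory of abelian $\ell$-adic representations. The hypothesis $\dim A\le 4$ will be used only to glue these together. As a preliminary step one reduces to the case that $k$ is a number field: spread $A$ out to an abelian scheme over a smooth $\mathbb{Q}$-variety with function field $k$, and apply Serre's specialization theorem to produce a closed point with number-field residue field whose associated abelian variety has the same adelic Galois image inside $GL(T(A))$; as the conclusion is insensitive to finite extension of $k$ and to passage to a finite-index subgroup, this suffices.

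So assume $k$ is a number field, fix a prime $\ell$, and let $G_\ell\subseteq GL(T_\ell(A)\otimes\mathbb{Q}_\ell)$ be the Zariski closure of $\rho_{A,\ell}(\mathrm{Gal}(\overline{k}/k))$. By Faltings' semisimplicity theorem $G_\ell^\circ$ is reductive, so its centre is a torus; by Bogomolov's theorem the homotheties $\mathbb{G}_m\cdot\mathrm{Id}$ lie in that centre and $\rho_{A,\ell}(\mathrm{Gal})$ is open in $G_\ell(\mathbb{Q}_\ell)$. Intersecting an open subgroup of $G_\ell(\mathbb{Q}_\ell)$ with the $\mathbb{Q}_\ell$-points $H_\ell$ of the closed subgroup $\mathbb{G}_m\cdot\mathrm{Id}$ shows $\rho_{A,\ell}(\mathrm{Gal})\cap H_\ell$ is open in $H_\ell$. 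The remaining, genuinely global, task is to show $\rho_{A,\ell}(\mathrm{Gal})\supseteq H_\ell$ for all but finitely many $\ell$, and that the finitely many exceptional constraints are mutually independent, so that $\rho_A(\mathrm{Gal})\cap H$ contains $\prod_{\ell\notin S}H_\ell\times\prod_{\ell\in S}U_\ell$ for some finite set of primes $S$ and open subgroups $U_\ell\subseteq H_\ell$.

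For this I would exploit that the relevant part of $\rho_A$ is abelian. The homotheties $\mathbb{G}_m\cdot\mathrm{Id}$ form a $\mathbb{Q}$-subtorus of the centre of the Mumford--Tate group $\mathrm{MT}(A)$, and the projection of the Hodge cocharacter spans the abelianization $\mathrm{MT}(A)^{\mathrm{ab}}$ as a $\mathbb{Q}$-torus. Composing $\rho_A$ with the projections $G_\ell\to G_\ell^{\mathrm{ab}}$ produces an abelian, semisimple adelic representation which, by Serre's theory of abelian $\ell$-adic representations, is a strictly compatible system attached to an algebraic Hecke character of $k$ built from the reflex norm of the Hodge cocharacter; such a system has \emph{open} adelic image. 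From this openness, together with the fact that the homotheties sit inside the centre as a $\mathbb{Q}$-subtorus, and using that the conjugacy classes of the Frobenius tori $T_v=\overline{\langle\rho_{A,\ell}(\mathrm{Frob}_v)^{\mathrm{ss}}\rangle}$ are independent of $\ell$ (the characteristic polynomial of $\rho_{A,\ell}(\mathrm{Frob}_v)$ being that of the Frobenius endomorphism of the reduction), one extracts the desired openness of $\rho_A(\mathrm{Gal})\cap H$.

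I expect the main obstacle to be the input the previous paragraph quietly relied on: that $G_\ell^{\mathrm{ab}}$ --- equivalently the centre of $G_\ell$ --- is the base change to $\mathbb{Q}_\ell$ of the centre of $\mathrm{MT}(A)$, uniformly in $\ell$. The inclusion $G_\ell\subseteq\mathrm{MT}(A)_{\mathbb{Q}_\ell}$ always holds (Deligne), so what is at stake is a fragment of the Mumford--Tate conjecture; without it one cannot exclude that the $\ell$-adic homothety groups shrink, or become correlated across $\ell$, for infinitely many $\ell$. For $\dim A\le 4$ this fragment is available, through the classification of the Hodge groups of abelian varieties of dimension at most $4$, the known cases of the Mumford--Tate conjecture in that range, and the rank estimates furnished by Serre's Frobenius tori, which together pin the centre of $G_\ell$ down independently of $\ell$. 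This is exactly where $\dim A\le 4$ enters, and dispensing with it is essentially tantamount to proving new cases of the Mumford--Tate conjecture.
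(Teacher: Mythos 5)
A point of context first: the paper offers no proof of this statement --- it is Wintenberger's theorem, quoted as an external input, and the only reasoning the paper supplies is the remark that the case of a finitely generated field reduces to the number-field case as in Serre's letter to Ribet. Your opening paragraph reproduces exactly that reduction (correctly, though note that one needs the version of Serre's specialization argument which preserves the \emph{adelic} image, not merely the image of a single $\rho_{A,\ell}$). Everything after that is an attempt to reprove Wintenberger's theorem itself, and there what you have is a plausible roadmap of the strategy in the literature rather than a proof.

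The $\ell$-by-$\ell$ statement via Faltings and Bogomolov is fine, but all of the difficulty lives in the adelic statement, and your treatment of it is circular and incomplete in two places. First, your second paragraph already assumes that the abelianized system is attached to an algebraic Hecke character with values in $\mathrm{MT}(A)^{\mathrm{ab}}$ and that Serre's theory gives \emph{open} adelic image there; openness in that theory is available for the smallest torus through which the abelian system factors, so these assertions presuppose the identification of the connected center of $G_\ell$ (equivalently $G_\ell^{\mathrm{ab}}$) with the corresponding piece of $\mathrm{MT}(A)$, uniformly in $\ell$. You then name exactly this identification as ``the main obstacle'' in your third paragraph and gesture at the classification of Hodge groups in dimension $\le 4$, known cases of Mumford--Tate, and Frobenius tori without an argument --- but establishing this central fragment, with the uniformity in $\ell$ needed for an adelic conclusion, is precisely the content of Wintenberger's paper, so deferring it leaves no proof and is where the hypothesis $\dim A\le 4$ actually has to be used in detail. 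Second, even granting that identification and openness of the Galois image in $\mathrm{MT}(A)^{\mathrm{ab}}(\mathbb{A}_f)$, the closing ``one extracts the desired openness'' conceals a genuine step: an element of the image whose class in the abelianization agrees with that of a homothety differs from it by an element of the derived group, so openness downstairs does not by itself place homotheties in the image. One must work with the center itself, control the isogeny from $Z(\mathrm{MT}(A))^{\circ}$ onto $\mathrm{MT}(A)^{\mathrm{ab}}$ integrally at almost all $\ell$, and use local information at $\ell$ (local algebraicity/Hodge--Tate theory) to manufacture actual homotheties of bounded index; this is where Serre's and Wintenberger's arguments do their work, and it is absent from the proposal.
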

\begin{remark}
Wintenberger only states this result for $k$ a number field, but the result for finitely generated fields of characteristic zero follows as in \cite[Letter to Ribet 1/1/1981, \S 1]{serre-oeuvres}.
\end{remark}
For $A$ as in the theorem, let $C(A)$ denote the index of $$[\rho_{A}(\text{Gal}(\overline{k}/k))\cap H: H].$$ By the theorem, $C(A)$ is finite. In particular, Wintenberger's theorem implies that for $\dim(A)\leq 4$, the set of $\ell$-adic homotheties in the image of Galois is all of $\mathbb{Z}_\ell^*\cdot \on{id}$, for almost all $\ell$. 

Similarly, Serre shows \cite[Letter to Ribet, 1/1/1981]{serre-oeuvres} that for any abelian variety $A$ (with no restriction on the dimension) over a finitely generated field  of characteristic zero, the constants $$C_\ell(A):=[\rho_{A,\ell}(\on{Gal}(\overline{k}/k)\cap H_\ell: H_\ell]$$ are bounded independent of $\ell$.  Let $S(A):=\max_\ell\, C_\ell(A)$ be the maximum of all of these indices. Note that if $C(A)$ is finite, we have $S(A)\leq C(A)$.

\begin{thm}[Manin-Mumford Conjecture, Ribet, {\cite[Section 2]{BR:Manin-Mumford}}]\label{thm:manin-mumford}
Let $X$ be a smooth projective curve of genus $g\geq 2$ over a finitely generated field $k$ of characteristic zero, and let $x\in X(k)$ be a point. Then there exists an integer $N=N(S(\text{Jac}(X)))$ such that if $X$ is embedded in $\text{Jac}(X)$ via the Abel-Jacobi map associated to $x$, then the image of $X(\overline{k})$ in $\text{Jac}(X)(\overline{k})$ does not contain any $m$-torsion points with $m>N$.
\end{thm}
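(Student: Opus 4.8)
The plan is to reduce the theorem to the following assertion: after embedding $X$ in $J:=\text{Jac}(X)$ by the Abel--Jacobi map sending $x$ to the origin $0$, every torsion point lying on $X$ has order bounded by an explicit function $N=N(S(J))$. So let $P\in X(\overline{k})$ be a torsion point of order $m$, and suppose toward a contradiction that $m$ is large.

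First I would exploit the Galois action. Since $X$ and the point $0\in X$ are defined over $k$, every $\sigma\in\text{Gal}(\overline{k}/k)$ maps $X$ to itself and sends $P$ to a torsion point of the same order $m$ lying on $X$. Using the bound $S(J)<\infty$ on the $\ell$-adic homothety indices $C_\ell(J)$ (or, in the case relevant to the paper, the adelic statement of Theorem~\ref{thm:serre-thm}, which applies since $\dim J=2$), one produces a subgroup $\Gamma\subseteq(\mathbb{Z}/m)^{*}$ of index $q:=[(\mathbb{Z}/m)^{*}:\Gamma]$ bounded in terms of $S(J)$, such that each $a\in\Gamma$ is realized by a Galois element acting on $J[m]$ as the homothety $a$; hence $aP\in X$ for all $a\in\Gamma$. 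Because any subgroup of index $q$ contains all $q$th powers, the fixed integer $c:=2^{q}$ satisfies $c\bmod m\in\Gamma$, and since $2\le c\le 2^{S(J)}$ while $m$ is large, $c\not\equiv\pm 1\pmod m$.

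The heart of the matter is then an intersection-theoretic dichotomy for the isogeny $[c]:J\to J$, which I will describe in the case $g=2$ that the paper uses (so $X$ is a divisor on the abelian surface $J$ with $(X\cdot X)=2$, and $[c]^{*}X\equiv c^{2}X$ numerically). For every $a\in\Gamma$ we have $aP\in X$ and also $(ca)P\in X$, since $ca\bmod m\in\Gamma$; thus the $|\Gamma|$ distinct points $\{aP:a\in\Gamma\}$ all lie in $X\cap[c]^{-1}X$. Now either $X$ is an irreducible component of the curve $[c]^{-1}X$ --- in which case $[c]$ maps $X$ onto $X$ and the induced self-map $X\to X$ of the smooth genus $2$ curve $X$ is finite of degree $(c^{2}X\cdot X)/(X\cdot X)=c^{2}\ge 4$, contradicting Riemann--Hurwitz since $2g-2>0$ --- or $X\cap[c]^{-1}X$ is finite, hence of cardinality at most $(X\cdot[c]^{-1}X)=c^{2}(X\cdot X)=2c^{2}\le 2\cdot 4^{S(J)}$. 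In the second case $|\Gamma|\le 2\cdot 4^{S(J)}$, so $\varphi(m)=q\cdot|\Gamma|\le 2\,S(J)\,4^{S(J)}$; since $\varphi(m)\to\infty$ with $m$, this bounds $m$ by an explicit $N(S(J))$, as desired. (For general $g$, where $X$ is no longer a divisor, one runs the same dichotomy after replacing $X$ by the surface $W_{2}\subseteq J$ swept out by $\on{Sym}^{2}X$ --- note $2aP\in W_{2}$ --- using that $W_{2}$ is of general type for $g\ge 3$, hence admits no finite self-map of degree $>1$.)

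I expect the main obstacle to be the first step for \emph{composite} $m$: a single Galois element need not act as a homothety on all of $J[m]$ at once, so extracting $\Gamma$ with index bounded purely in terms of $S(J)=\max_{\ell}C_{\ell}(J)$ requires care --- one either invokes the adelic statement (Theorem~\ref{thm:serre-thm}, valid here as $\dim J\le 4$) or argues one prime power at a time as in Ribet's account, deducing that each $\ell^{v_{\ell}(m)}$ is bounded in terms of $S(J)$ and then that $m$ is. A secondary point, irrelevant to this paper's application, is making the $W_{2}$ argument for $g\ge 3$ precise, where one must also rule out translated abelian subvarieties in the relevant surface; I would treat the $g=2$ case in full and refer to Lang's original argument for the rest.
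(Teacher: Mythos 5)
The paper does not actually prove Theorem \ref{thm:manin-mumford}; it is quoted from the literature (Ribet, as presented in the cited survey), and the argument alluded to is the Lang--Serre homothety trick. Your reconstruction is exactly that classical argument, and in the form you give it --- $g=2$, with the adelic input of Theorem \ref{thm:serre-thm} --- it is essentially sound: the extraction of $\Gamma$, the inclusion $\{aP\}\subset X\cap[c]^{-1}X$, the numerical identity $[c]^*X\equiv c^2X$, the Riemann--Hurwitz exclusion of $X$ being a component of $[c]^{-1}X$, and the final count are all correct. Two concrete points need repair, though. First, the choice $c=2^{q}$ fails whenever $m$ is even: $2$ is then not a unit mod $m$, so $c\bmod m$ does not lie in $(\mathbb{Z}/m)^{*}$ at all, let alone in $\Gamma$. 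The fix is to take $c=a^{q}$ with $a$ the least prime not dividing $m$; then $a=O(\log m)$, and the resulting inequality $\varphi(m)/q\le 2c^{2}=O((\log m)^{2q})$ still forces $m\le N(q)$ since $\varphi(m)$ outgrows any fixed power of $\log m$, but the bound is no longer the clean $2\,S\,4^{S}$ you wrote.

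Second, and more substantively, your completed argument proves the statement with $N$ a function of the adelic index $C(\operatorname{Jac}(X))$ from Theorem \ref{thm:serre-thm}, not of $S(\operatorname{Jac}(X))=\max_\ell C_\ell$ as in the statement; and the fallback you sketch (``one prime power at a time, bounding each $\ell^{v_\ell(m)}$'') does not work as described. A Galois element acting as a homothety on $T_\ell$ only controls the $\ell$-primary component of $P$: its action on the prime-to-$\ell$ part is unconstrained, so $\sigma(P)$ is no longer an integer multiple of $P$ and the points you produce need not lie in $X\cap[c]^{-1}X$. Obtaining a bound from per-$\ell$ information alone is precisely where Serre's stronger statement (the image of Galois contains all $c$-th power homotheties in $\widehat{\mathbb{Z}}^{*}$, with $c$ uniform) or Ribet's almost-rational-torsion argument in the cited survey is needed; for this paper's application the distinction is harmless, since $\dim\operatorname{Jac}(X)=2\le 4$ and Lemma \ref{lem:field-of-defn} in fact bounds the adelic index $C(\operatorname{Jac}(X'))$, so the $N(C)$ version suffices for Theorem \ref{thm:mainthm}. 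Finally, note that the theorem is stated for all $g\ge 2$, while your $W_2$ parenthetical for $g\ge 3$ is only a gesture (it needs $[c](W_2)\subseteq W_2$ handled, the singular and hyperelliptic cases of $W_2$, and the fact that dominant self-maps of varieties of general type are birational); since the paper uses only $g=2$, deferring to Lang's original argument there is reasonable, but it should be flagged as a citation rather than a proof.
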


Here $S(\text{Jac}(X))$ is the constant defined above.
Aside from this ``explicit" form of the Manin-Mumford conjecture, the main observation going into the proof of Theorem \ref{thm:mainthm} is the following lemma on the fields of definition of genus $2$ curves mapping to an abelian surface.
\begin{lemma}\label{lem:field-of-defn}
Let $A$ be a geometrically simple abelian surface over a finitely generated field $k$ of characteristic zero. Then there exist constants $C'=C'(A)$ such that if $X$ is a curve of genus $2$ over $\overline{k}$ and $f: X\to A_{\bar k}$ is a non-constant map sending a Weierstrass point of $X$ to the origin, then:
\begin{itemize}
    \item there exists a finite extension $L\subset \overline{k}$ of $k$, a curve $X'/L$, a Weierstrass point $x\in X'(L)$, a map $f': X'\to A_{L}$ sending $x$ to the origin, and an identification of $f'_{\overline{k}}$ with $f$, such that
    \item  The constant  $C(\on{Jac}(X'))$ satisfies $C(\on{Jac}(X'))<C'$.
\end{itemize}
\end{lemma}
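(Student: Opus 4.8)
The plan is to take $L$ to be the field of definition of the image curve $D:=f(X)\subseteq A_{\overline k}$. This $L$ is finite over $k$: as a closed subscheme of the projective $\overline k$-scheme $A_{\overline k}$, $D$ is cut out by finitely many equations, and the subfield of $\overline k$ generated over $k$ by their coefficients is a finitely generated algebraic extension of $k$, hence finite. Letting $D'/L$ be the descent, $X'$ its normalization, $x\in X'$ a Weierstrass point over the (rational) origin — after, if necessary, a further finite extension of $L$, which does not affect the argument below — and $f'\colon X'\to A_L$ the induced map, we obtain the data of the first bullet. Since $A_{\overline k}$ is simple and $\dim\on{Jac}(X')=\dim A=2$, the image of $\on{Jac}(f')\colon\on{Jac}(X')\to A_L$ is a nonzero abelian subvariety of $A$, hence all of it, so comparing dimensions $\on{Jac}(f')$ is an isogeny over $L$.

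Next I would use two reductions. First, the homothety index is an isogeny invariant over a fixed base field: for an isogeny $\phi\colon B\to A$ over a field $F$, the map $T_\ell(\phi)$ is $\on{Gal}(\overline F/F)$-equivariant and identifies $T_\ell(B)$ with a finite-index sublattice of $T_\ell(A)$, and since multiplication by an $\ell$-adic unit is an automorphism of any $\mathbb Z_\ell$-module, an element of $\on{Gal}(\overline F/F)$ is a scalar on $T_\ell(B)$ if and only if it is the same scalar on $T_\ell(A)$; hence $C_\ell(B/F)=C_\ell(A/F)$ for all $\ell$, and likewise adelically. Thus $C(\on{Jac}(X'))=C(A_L)$. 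Second, $C(A_L)=C(A)$: the inequality $C(A_L)\ge C(A)$ is immediate, and for the reverse it suffices to show that every $\sigma\in\on{Gal}(\overline k/k)$ with $\rho_A(\sigma)\in H$ fixes $D$ — such $\sigma$ then lie in $\on{Gal}(\overline k/L)$, giving $\rho_A(\on{Gal}(\overline k/k))\cap H\subseteq\rho_A(\on{Gal}(\overline k/L))\cap H$. Granting this, $C(\on{Jac}(X'))=C(A)$, which is finite by Theorem~\ref{thm:serre-thm} (as $\dim A\le 4$), and we may take $C'(A):=C(A)+1$.

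It remains to prove the claim: a homothety $\sigma$ in the image of Galois fixes $D$. Such a $\sigma$ acts on $T(A)$ by a scalar $c=(c_\ell)\in\prod_\ell\mathbb Z_\ell^{\ast}$, so it fixes every finite subgroup scheme of $A_{\overline k}$; applying this to the kernel of an isogeny $A_{\overline k}\to\on{Jac}(X)$ complementary to $\on{Jac}(f)$ shows that $\sigma$ fixes the pair $(\on{Jac}(X),\on{Jac}(f))$ together with its canonical realization as an isogeny to $A$. As $\sigma$ acts on $T_\ell(\on{Jac}(X))$ by a scalar, it acts (via the canonical pairing of Tate modules) on $\on{NS}(\on{Jac}(X))\otimes\mathbb Q_\ell$ by a scalar, which — being of finite order on the integral lattice $\on{NS}(\on{Jac}(X))$ and preserving ampleness — must be trivial, so $\sigma$ fixes the principal polarization; and since $c_2$ is a $2$-adic unit, $\sigma$ acts trivially on $\on{Jac}(X)[2]$. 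Now $D=\on{Jac}(f)(\Theta)$ for $\Theta$ one of the six odd symmetric theta divisors of $\on{Jac}(X)$ (the images of $X$ under Abel--Jacobi based at its Weierstrass points); $\sigma$ permutes these six divisors, their pairwise differences lie in $\on{Jac}(X)[2]$ and hence are fixed by $\sigma$, and since the assignment of a difference to an unordered pair is a bijection onto the nonzero $2$-torsion, the induced permutation of the six divisors is the identity. Hence $\sigma$ fixes $\Theta$, so $D^\sigma=D$. The step I expect to be the main obstacle is exactly this last one: carrying out carefully the identification of $D$ with theta-divisor data on $\on{Jac}(X)$ and the analysis of the $2$-torsion torsor of symmetric theta divisors, together with the (harmless) bookkeeping when the origin is a singular point of $D$ lying under two Weierstrass points.
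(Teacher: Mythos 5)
Your argument is correct in substance, and while it runs on the same engine as the paper's proof --- namely, that a Galois element acting as a homothety on $T(A)$ stabilizes every auxiliary structure in sight --- it is organized along a genuinely different route. The paper constructs an explicit descent of $(X,f)$: it descends $\on{Jac}(X)$ and the isogeny to the fixed field $k'$ of the stabilizer of $f_*T(\on{Jac}(X))\subset T(A)$, then descends the polarization and the theta bundle over further extensions of absolutely bounded degree (finiteness of the Galois image in $GL(NS)$, and splitting the $\underline{\on{Pic}}^0[2]$-torsor of theta bundles), and finally bounds $C(\on{Jac}(X'))$ by $C(A)$ times these bounded degrees. You instead take $L$ to be the minimal field of definition of the image curve $D=f(X)$, prove directly that every homothety in $\rho_A(\on{Gal}(\overline{k}/k))$ fixes $D$, conclude $C(A_L)=C(A)$, and transfer the constant across the isogeny $\on{Jac}(X')\to A_L$ (your isogeny-invariance argument for the homothety index is fine). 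The key extra fact you exploit, which the paper does not use, is that a homothety is $\equiv 1 \pmod 2$ and hence acts trivially on $\on{Jac}(X)[2]$; this replaces the paper's torsor-splitting field extension by the combinatorial observation that the six odd symmetric theta divisors differ by the fifteen distinct nonzero $2$-torsion translations, and it buys you an essentially sharp constant ($C(A)$ itself) with no tower $k'\subset k''\subset L$ to track, at the cost of having to phrase ``$\sigma$ fixes $\on{Jac}(X)$, its polarization, and $\Theta$'' via the canonical identification $\on{Jac}(X)^\sigma\simeq \on{Jac}(X)$ coming from $\sigma$-stability of the kernel, which you do set up in the right order.

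Two points to tighten. First, the identification of $f'_{\overline{k}}$ with $f$ requires $f$ to be birational onto $D$, so that the normalization of $D$ really is $X$; this follows from simplicity of $A$ (a nontrivial factorization $X\to \widetilde{D}$ would force a rational or elliptic curve in $A$ by Riemann--Hurwitz), but it should be said, since you use it both for the first bullet and when writing $D=\on{Jac}(f)(\Theta)$. Second, the ``further finite extension'' needed to make a Weierstrass point over the origin rational is not automatically harmless for the exact constant $C'=C(A)+1$: a priori it multiplies the homothety index by its degree. Either note that this degree is at most $6$ (Galois over $L$ permutes the Weierstrass points in the fibre over the origin), so some absolute bound like $C'=6\,C(A)+1$ suffices for the lemma, or upgrade your claim to say that a homothety fixes not just $D$ but also each Weierstrass point of $X$ in the fibre over the origin (these are $2$-torsion points of $\on{Jac}(X)$ on $\Theta$, hence fixed), so that homotheties already lie in the Galois group of the minimal field of definition of the pair $(D,x)$ and no loss occurs.
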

\begin{proof}

Since $A$ is simple, the map $f_*: \on{Jac}(X_{\bar k})\to A_{\bar k}$ is an isogeny, and the induced map $T(\on{Jac}(X_{\bar k}))\to T(A)$ has a finite index image $T'\subset T(A)$. Let $G'\subset \on{Gal}(\overline{k}/k)$ be the stabilizer of $T'$ under $\rho_A$ and let $k'$ be the fixed field of $G'$. The Jacobian $\on{Jac}(X_{\bar k})$ and the map $f_*$ descend to an abelian variety $J'$ and a map $g: J'\to A_{k'}$ over $k'$ (as, for example, the kernel of the isogeny dual to $f_*$ is stable under the action of the absolute Galois group of $k'$).

We now descend the principal polarization on $\on{Jac}(X)$ to $J'$, after a field extension of absolutely bounded degree. Let $NS(J')$ be the N\'eron-Severi group of $J'$; this is a free abelian group of rank at most $4$. The action of $\on{Gal}(\overline{k}/k')$ on $NS(J')$ has finite image. Hence the order of this image is bounded above, for example, by the order of the maximal finite subgroup of $GL_4(\mathbb{Z})$, hence is independent of $X$. Let $k''$ be the fixed field of the kernel of this action, so that each component of $\underline{\on{Pic}}(\on{Jac}(X))$ descends to $k''$. After possibly replacing $k''$ by a finite extension of absolutely bounded degree, we may assume in addition that the absolute Galois group of $k''$ acts trivially on the $2$-torsion group $\underline{\on{Pic}}^0(J')_{k''}[2](\overline{k})$.

Choose an embedding of $X$ in $\on{Jac}(X)$ sending a Weierstrass point to the origin, and let $\mathscr{L}:=\mathscr{O}_{\on{Jac}(X)}(X)\in \on{Pic}(\on{Jac}(X))$ be the corresponding line bundle, a.k.a.~the theta bundle. The line bundle $\mathscr{L}$ yields a geometric point $[\mathscr{L}]$ of $\underline{\on{Pic}}(J')_{k''}$, and for any $\sigma\in \on{Gal}(\overline{k''}/k'')$, $[\mathscr{L}]-[\mathscr{L}]^\sigma$ is a $2$-torsion point of $\underline{\on{Pic}}^0(J')_{k''}$ (as the property of sending a Weierstrass point to the origin may be checked after base change to $\overline{k}$, and any two Weierstrass points differ by a $2$-torsion point).

Thus the orbit of $[\mathscr{L}]$ under  $\underline{\on{Pic}}^0(J')_{k''}[2]\simeq(\mathbb{Z}/2)^{4}$ is defined over $k''$; there exists an extension $L$ of $k''$ of absolutely bounded degree splitting this $\underline{\on{Pic}}^0(J')_{k''}[2]$-torsor. (Explicitly, the torsor itself is the spectrum of an \'etale $k''$-algebra of degree $16$, and it will split over the residue field of any of its closed points).

Now the line bundle $\mathscr{L}$ descends to $L$; let $s\in H^0(\mathscr{L})$ be any non-zero global section. As $\mathscr{L}$ is the theta bundle, $h^0(\mathscr{L})=1$, and so $X':=V(s)$ is a descent of $X$ to $L$. Moreover the embedding of $X'$ in its Jacobian given by $\mathscr{L}$ sends a Weierstrass point to the origin by construction; hence $X'$ is equipped with a rational Weierstrass point $x$. We now need only verify that the condition of the second bullet point is satisfied, i.e. we must bound $C(\on{Jac}(X'))$ purely in terms of $C(A)$.

First, note that $C(J')$ is equal to $C(A)$, as any homothety in $GL(T(A))$ stabilizes any subgroup of $T(A)$. That is, the index of the image of the absolute Galois group of $k'$ in the homotheties of $T(A)$ is bounded above by $C(A)$. Now observe that the degrees of $k''$ and $L$ over $k'$ are all absolutely bounded, so we're done.
\end{proof}
We may now prove the main result:
\begin{proof}[Proof of Theorem \ref{thm:mainthm}]
Let $A$ be a simple abelian surface over an algebraically closed field as in the statement of the theorem. $A$ descends to a geometrically simple abelian surface over some finitely generated field $k$ of characteristic zero. Lemma \ref{lem:field-of-defn} gives us a constant $C'=C'(A)$ such that for any genus $2$ curve $X$ with a nonconstant map $f: X\to A_{\overline{\mathbb{Q}}}$ sending a Weierstrass point to the origin, $(X,f)$ is defined over a finitely generated field $k'$ such that $$C_\ell(\on{Jac}(X)):=[\rho_{\text{Jac}(X),  \ell}(\text{Gal}(\overline{k}/k'))\cap H_\ell: H_\ell]$$ is bounded above by $C'$. Thus by Theorem \ref{thm:manin-mumford}, there exists a constant $N(C')$ such that, under an embedding $X\hookrightarrow \text{Jac}(X)$ arising from a Weierstrass point of $X$, any torsion point of $\text{Jac}(X)$ lying in the image of $X$ has order bounded above by $N(C')$. We claim that any torsion point of $A$ in the image of $f$ also has order bounded above by $N(C')$

But by the Albanese property of $\text{Jac}(X)$, the map $f$ factors through a map $\on{Jac}(X)\to A$; this map is necessarily an isogeny as $A$ is geometrically simple. Let $x$ be a torsion point in the image of $f$; its preimages in $\on{Jac}(X)$ are torsion, and at least one lies in the image of the Abel-Jacobi map. Hence its order is bounded by $N(C')$, by the previous paragraph. This completes the proof.
\end{proof}
It is natural to ask if the analogous statement where one allows the curves $C$ to vary over hyperelliptic curves of all genera holds:
\begin{question}
Let $A$ be a simple abelian surface over $\overline{\mathbb{Q}}$. Let $S\subset A(\overline{\mathbb{Q}})$ be the set of torsion points in $A$ in the image of some
map $f: (C,O_C)\to (A,O_A)$ with $C$ hyperelliptic, $O_C$ Weierstrass. Is $S$ finite?
\end{question}
A positive answer would resolve Bogomolov-Tschinkel's question about rational points contained in a rational curve on a K3 surface over $\overline{\mathbb{Q}}$, i.e.~it would show that there exist Kummer K3 surfaces over $\overline{\mathbb{Q}}$ with a $\overline{\mathbb{Q}}$-point not contained in any rational curve.

\bibliographystyle{alpha}
\bibliography{bogomolov}

\begin{thebibliography}{Win02}

\bibitem[BM10]{BM:smooth-curves}
Arthur Baragar and David McKinnon.
\newblock {$K3$} surfaces, rational curves, and rational points.
\newblock {\em J. Number Theory}, 130(7):1470--1479, 2010.

\bibitem[BR03]{BR:Manin-Mumford}
Matthew~H. Baker and Kenneth~A. Ribet.
\newblock Galois theory and torsion points on curves.
\newblock volume~15, pages 11--32. 2003.
\newblock Les XXII\`emes Journ\'{e}es Arithmetiques (Lille, 2001).

\bibitem[BT00]{BT:elliptic}
F.~A. Bogomolov and Yu. Tschinkel.
\newblock Density of rational points on elliptic {$K3$} surfaces.
\newblock {\em Asian J. Math.}, 4(2):351--368, 2000.

\bibitem[BT05]{BT:finite-fields}
Fedor Bogomolov and Yuri Tschinkel.
\newblock Rational curves and points on {$K3$} surfaces.
\newblock {\em Amer. J. Math.}, 127(4):825--835, 2005.

\bibitem[Lan65]{lang:division}
Serge Lang.
\newblock Division points on curves.
\newblock {\em Ann. Mat. Pura Appl. (4)}, 70:229--234, 1965.

\bibitem[Ser13]{serre-oeuvres}
Jean-Pierre Serre.
\newblock {\em Oeuvres/{C}ollected papers. {IV}. 1985--1998}.
\newblock Springer Collected Works in Mathematics. Springer, Heidelberg, 2013.
\newblock Reprint of the 2000 edition [MR1730973].

\bibitem[Win02]{wintenberger2002demonstration}
J.-P. Wintenberger.
\newblock D\'{e}monstration d'une conjecture de {L}ang dans des cas
  particuliers.
\newblock {\em J. Reine Angew. Math.}, 553:1--16, 2002.

\end{thebibliography}

\end{document}